\newtheorem{theorem}{Theorem}
\newtheorem{lemma}{Lemma}
\newtheorem{proposition}{Proposition}
\newtheorem{corollary}{Corollary}
\title{Minimum supports of eigenfunctions of Johnson graphs\thanks{This research was financed by the Russian Science Foundation (grant No 14-11-00555) }}
\author{Konstantin Vorob'ev, Ivan Mogilnykh, Alexandr Valyuzhenich, %
  \thanks{E-mail address: \texttt{vorobev@math.nsc.ru, ivmog@math.nsc.ru, graphkiper@mail.ru}}}
\affil{Sobolev Institute of Mathematics, pr. Akademika Koptyuga 4,
Novosibirsk 630090, Russia}
\begin{document}

\maketitle

\begin{abstract}
We study the weights of eigenvectors of the Johnson graphs
$J(n,w)$. For any $i \in \{1,\ldots,w\}$ and sufficiently large
$n, n\geq n(i,w)$ we show that an eigenvector of $J(n,w)$ with the
eigenvalue $\lambda_i=(n-w-i)(w-i)-i$ has at least
$2^i(^{n-2i}_{w-i})$ nonzeros and obtain a characterization of
eigenvectors that attain the bound.
\end{abstract}

\section{Introduction}

  Let $G=(V,E)$ be an undirected graph.
 A real-valued nonzero function $f:V\rightarrow R$ is called
 a $\lambda$-{\it eigenfunction} of $G$ if the following equality holds for any
 $x\in V$:

 $$\lambda f(x)=\sum_{y\in V:(x,y)\in E} f(y). $$
In other words, $f$ is a $\lambda$-eigenfunction of $G$ if its
vector of values $\overline{f}$ is an eigenvector of the adjacency
matrix $A_G$ of $G$ with eigenvalue $\lambda$ or $\overline{f}$ is
the all-zero vector, i.e. the following holds:

$$A_G\overline{f}=\lambda \overline{f}.$$

The vertices of the {\it Hamming graph} $H(n)$ are the binary
vectors of length $n$, where two vectors are adjacent if they
differ in exactly one coordinate position. Given a pair of vectors
$x$ and $y$ of length $n$ {\it the Hamming distance} $d(x,y)$
between a pair of vectors $x$ and $y$ of length $n$ is the Hamming
graph distance between $x$ and $y$, i.e. the number of positions
at which the corresponding symbols are different. {\it The
support} of a real-valued function (or vector) $f$ is denoted by
$supp(f)$ is the set of nonzeros of $f$. {\it The weight} $wt(x)$
of a vector $x$ is the number of nonzero symbols of $x$.

 The vertices of the Johnson graph $J(n,w)$ are the binary
vectors
 of length $n$ with $w$ ones, where two
vectors are adjacent if they have exactly $w-1$ common ones. Note
that the vertices of $J(n,w)$ are vertices of $H(n)$ of weight
$w$, with the Johnson graph distance being equal halfed Hamming
graph distance.

Various combinatorial objects with extreme characteristics could
be defined in terms of eigenfunctions with certain restrictions.
  In particular, several important notions, such as $(w-1)-(n,w,1)$-designs (including
Steiner triple and quadruple systems), equitable 2-cell partitions
and perfect codes could be defined as eigenfunctions of Johnson
graphs \cite{Cvetkovic}, \cite{AvgMog}. \cite{martin}. The
symmetric difference of a pair of such objects (for example,
Steiner triple systems) is a bitrade \cite{krotov}. In case of the
Johnson graphs, bitrades of small size play an important role in
the classification and characterization problems (for example, see
\cite{assmus}, \cite{krotov}, \cite{zin}) and proved to be a
useful constructive tool for Steiner triple and quadruple systems
\cite{assmus}, \cite{aliev}. Moreover, the topic of the current
paper is related to the question of existence of 1-perfect codes
in different graphs which is one of the most captive problems in
combinatorial coding theory. For $n\leq 2^{250}$ it is known that
no such codes exist in the Johnson graphs $J(n,w)$, see
\cite{gordon}. The study of bitrades of 1-perfect codes may lead
to an improvement of this problem. In this light, the question of
finding the size of minimum support of eigenfunctions of Johnson
graphs for arbitrary fixed eigenvalues is tempting and intriguing.

For surveys on combinatorial objects connected with
eigenfunctions, their bitrades and general theory the reader is
referred to the works of
 Krotov et. al \cite{krotov}, \cite{krotovtezic}, Cho \cite{Cho},
 \cite{Cho2} and the book of Colbourn and Dinitz \cite{CD}.

In the current paper the minimum support question for
eigenfunctions of the Johnson graphs $J(n,w)$ with the eigenvalue %^
$(w-i)(n-w-i)-i$ for any $i$, $w$ and $n$, $n\geq n(i,w)$ is
solved and a characterization of minimum support functions is
obtained. The solution for the problem in case of the minimum
eigenvalue is $2^{w}$ \cite{hwang} and the value is attained on a
class of so-called Steiner bitrades \cite{hwang}, \cite{krotov}
that include Pasch-configuration.

\section{Preliminaries}

\subsection{Induced eigenfunctions and eigenvalues of Johnson graphs}

Let $f$ be a real-valued function defined on the vertices of the
Johnson graph $J(n,i)$. Define the function $I^{i,w}(f)$ on the
vertices of $J(n,w)$ as follows:

$$I^{i,w}(f)(x)=\sum_{y, wt(y)=i, d(x,y)=|w-i|} f(y). $$

The function $I^{i,w}(f)$ is called {\it induced} in $J(n,w)$ by
$f$ \cite{AvgMog}. The idea of using induced functions for
representation of the Johnson scheme
 has been exploited since the beginning of its study \cite{Delsarte}. In \cite{AvgMog} the
 concept was
 generalized to a wider class of graphs.

 \begin{theorem}\cite{AvgMog}\label{i_f}

1. Let $f$ be a $\lambda$-eigenfunction of $J(n,i)$. Then if
$i\leq w$ then $I^{i,w}(f)$ is a
$(\lambda+(w-i)(n-i-w))$-eigenfunction of $J(n,w)$.

2. Let $f$ be a real-valued function on the vertices of $J(n,w)$.
Then $I^{w,w-1}(f)\equiv 0$  iff $f$ is a $(-w)$-eigenfunction.
  \end{theorem}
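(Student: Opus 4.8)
The plan is to deduce both parts from a small number of ``composition identities'' for the induced operators, plus, for part~1, one double-counting identity. Throughout, identify a weight-$k$ vertex of $J(n,k)$ with the $k$-subset of $\{1,\dots,n\}$ it supports; for $i\le w$ the condition $d(x,y)=w-i$ then just says $y\subseteq x$, so $I^{i,w}(f)(x)=\sum_{y\subseteq x,\,|y|=i}f(y)$, and two $k$-sets are adjacent in $J(n,k)$ iff they meet in $k-1$ elements.

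\emph{Part 1.} Fix a weight-$w$ vertex $x$ and interchange the order of summation:
$$\sum_{x'\sim x}I^{i,w}(f)(x')=\sum_{|y|=i}f(y)\,N(y,x),\qquad N(y,x):=\#\{x'\sim x:\ y\subseteq x'\}.$$
A neighbour of $x$ has the form $(x\setminus\{a\})\cup\{b\}$ with $a\in x$, $b\notin x$, and it contains $y$ only if $y\setminus x\subseteq\{b\}$; a short case analysis on $|y\setminus x|$ then gives $N(y,x)=(w-i)(n-w)$ if $y\subseteq x$, $N(y,x)=w-i+1$ if $|y\cap x|=i-1$, and $N(y,x)=0$ otherwise. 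Hence
$$\sum_{x'\sim x}I^{i,w}(f)(x')=(w-i)(n-w)\,I^{i,w}(f)(x)+(w-i+1)\,T(x),\qquad T(x):=\sum_{|y|=i,\ |y\cap x|=i-1}f(y).$$
The key step is to re-express $T(x)$ using that $f$ is a $\lambda$-eigenfunction: sum the eigenequation $\sum_{u'\sim u}f(u')=\lambda f(u)$ of $J(n,i)$ over all $i$-subsets $u$ of $x$. The same reordering trick, now counting the $i$-subsets $u\subseteq x$ that are $J(n,i)$-adjacent to a fixed $i$-set $u'$, produces the coefficient $i(w-i)$ when $u'\subseteq x$ and $w-i+1$ when $|u'\cap x|=i-1$, so $i(w-i)\,I^{i,w}(f)(x)+(w-i+1)\,T(x)=\lambda\,I^{i,w}(f)(x)$. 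Substituting this back yields
$$\sum_{x'\sim x}I^{i,w}(f)(x')=\bigl(\lambda+(w-i)(n-w)-i(w-i)\bigr)I^{i,w}(f)(x)=\bigl(\lambda+(w-i)(n-i-w)\bigr)I^{i,w}(f)(x),$$
which is the claim. (Alternatively one could handle only the one-step case $w=i+1$ and iterate, using $I^{i+1,w}\circ I^{i,i+1}=(w-i)\,I^{i,w}$ and checking that the eigenvalue shifts telescope to $(w-i)(n-i-w)$; the direct computation is shorter.)

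\emph{Part 2.} The engine is the identity $I^{w-1,w}\circ I^{w,w-1}=A_{J(n,w)}+w\cdot\mathrm{Id}$, proved by the same reordering: for $|x|=w$ one has $(I^{w-1,w}\circ I^{w,w-1})(f)(x)=\sum_{a\in x}\sum_{b\notin x\setminus\{a\}}f((x\setminus\{a\})\cup\{b\})$, and the $w$ terms with $b=a$ each equal $f(x)$ while the remaining terms run exactly once through the neighbours of $x$. Thus $I^{w,w-1}(f)\equiv 0$ forces $(A_{J(n,w)}+w\cdot\mathrm{Id})\overline{f}=0$, i.e. $f$ is a $(-w)$-eigenfunction; this gives one implication and reduces the other to injectivity of $I^{w-1,w}$. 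That injectivity follows from the companion identity $I^{w,w-1}\circ I^{w-1,w}=A_{J(n,w-1)}+(n-w+1)\cdot\mathrm{Id}$: if $I^{w-1,w}(g)=0$, then $g$ lies in the kernel of $A_{J(n,w-1)}+(n-w+1)\cdot\mathrm{Id}$, but $-(n-w+1)$ is strictly below the least eigenvalue $-(w-1)$ of $J(n,w-1)$ whenever $n\ge 2w-1$ (true throughout the range considered in the paper), so $g=0$; equivalently, this is the classical fact that the inclusion matrix of $(w-1)$-subsets into $w$-subsets of an $n$-set has full column rank. Therefore, if $f$ is a $(-w)$-eigenfunction, then $I^{w-1,w}(I^{w,w-1}(f))=(A_{J(n,w)}+w\cdot\mathrm{Id})\overline{f}=0$, and injectivity gives $I^{w,w-1}(f)\equiv 0$.

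I expect the only delicate part to be computing the multiplicities $N(y,x)$ and their analogue correctly in every case, including the degenerate ranges $i=w$ and $i=0$; everything after that is linear algebra. The sole external ingredient is the standard description of the spectrum of $J(n,w-1)$, used just to rule out $-(n-w+1)$ as an eigenvalue for $n\ge 2w-1$.
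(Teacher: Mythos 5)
Your proposal is correct, but it takes a genuinely different route from the paper. The paper's argument for Theorem~\ref{i_f} is only a sketch: it cites \cite{AvgMog} for part~2 and for the one-step case $i=w-1$ of part~1, and then extends part~1 to general $i$ via the composition identity $(w-i)!\,I^{i,w}=I^{w-1,w}\circ\cdots\circ I^{i,i+1}$ and induction on $w$ (your parenthetical remark $I^{i+1,w}\circ I^{i,i+1}=(w-i)I^{i,w}$ is exactly this mechanism). You instead prove everything from scratch: part~1 by a direct double count of the multiplicities $N(y,x)$ combined with summing the eigenequation over the $i$-subsets of $x$ (your counts $(w-i)(n-w)$, $w-i+1$, $i(w-i)$, $w-i+1$ are all correct and the arithmetic $(w-i)(n-w)-i(w-i)=(w-i)(n-w-i)$ gives the stated shift), and part~2 via the operator identities $I^{w-1,w}I^{w,w-1}=A_{J(n,w)}+w\,\mathrm{Id}$ and $I^{w,w-1}I^{w-1,w}=A_{J(n,w-1)}+(n-w+1)\,\mathrm{Id}$, with the converse implication reduced to injectivity of $I^{w-1,w}$ (Gottlieb's full-rank theorem, or the fact that $-(n-w+1)$ lies below the spectrum of $J(n,w-1)$). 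What your approach buys is a self-contained proof whose only external ingredient is that classical rank/spectrum fact; what the paper's approach buys is brevity, at the price of deferring the substance to \cite{AvgMog}. One caveat to note explicitly: your converse in part~2 requires $n\ge 2w-1$, whereas the theorem is stated without restriction; this is harmless, both because for $n\le 2w-2$ the graph $J(n,w)\cong J(n,n-w)$ has all eigenvalues at least $-(n-w)>-w$, so the converse direction is vacuous for nonzero $f$, and because the paper only ever invokes the forward direction (in Proposition~\ref{min_trade_e_f} and in the main theorem), which your composition identity gives for every $n$.
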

  \begin{proof} The sketch of the proof is done by induction on $w$.
 The second and the first statements of the theorem  for
$i=w-1$  were proven in \cite{AvgMog}, see Theorem 1. In general 
case it is easy to see that for any $f$ we have:
$(w-i)!I^{i,w}(f)=I^{w-1,w}(\ldots (I^{i+1,i+2}(I^{i,i+1}(f))))$,  %!
which finishes the proof.
\end{proof}

Let $M$ and $M'$ be two
 nonintersecting sets of size $i$ of coordinate positions whose
elements are in a one-to-one correspondence $'$. For a subset $I$
of $M$ by $I'$ denote the set of its images $\{m':m \in
I\}\subseteq M'$. Let the function $f^{i,w,n}$ be defined on the
vectors of weight $w$ and length $n$:

$$f^{i,w,n}(x)=
    (-1)^{|M\cap supp(x)|}, \mbox{if } |supp(x)\cap (M\cup M')|=i \mbox{ and }$$
    $$(supp(x)\cap M)' \cup (supp(x)\cap M')=M',$$
and $f^{i,w,n}(x)=0$ otherwise. The main result of the current
paper is  that $f^{i,w,n}$ is the minimum support eigenfunction of
the Johnson graphs $J(n,w)$ asymptotically on $n$.

\begin{proposition}\label{min_trade_e_f}
The function $f^{i,w,n}$ is a $((w-i)(n-w-i)-i)$-eigenfunction of
$J(n,w)$ with the support of size $2^i(^{n-2i}_{w-i})$.
\end{proposition}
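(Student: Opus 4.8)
The plan is to deduce the proposition from Theorem~\ref{i_f} by exhibiting $f^{i,w,n}$ as the function induced in $J(n,w)$ by a $(-i)$-eigenfunction of $J(n,i)$. The obvious candidate is $f^{i,i,n}$, i.e. the $w=i$ instance of the construction: it is supported exactly on the $2^i$ transversals of the pairing on $M\cup M'$ --- those weight-$i$ vectors whose support meets each pair $\{m,m'\}$ in exactly one element --- where its value is $(-1)^{|M\cap supp(x)|}$. So the first goal is to check that $f^{i,i,n}$ is a $(-i)$-eigenfunction of $J(n,i)$; by part~2 of Theorem~\ref{i_f} this amounts to showing $I^{i,i-1}(f^{i,i,n})\equiv 0$.

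To see this, evaluate $I^{i,i-1}(f^{i,i,n})$ at a weight-$(i-1)$ vector $z$. If $supp(z)$ is not a partial transversal (either it leaves $M\cup M'$ or it doubly covers some pair), then no transversal contains $supp(z)$ and the sum is empty. Otherwise $supp(z)$ meets exactly $i-1$ of the pairs in one element each, and the only weight-$i$ vectors $y$ with $supp(y)\supseteq supp(z)$ that are transversals are the two completions obtained by adjoining one of the two elements of the unique unmet pair; these contribute $(-1)^{|M\cap supp(z)|}$ and $-(-1)^{|M\cap supp(z)|}$, so they cancel. Hence $I^{i,i-1}(f^{i,i,n})$ is identically zero and $f^{i,i,n}$ is a $(-i)$-eigenfunction of $J(n,i)$.

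Next I would compute $I^{i,w}(f^{i,i,n})$ directly. For $x$ of weight $w$, the conditions $wt(y)=i$ and $d(x,y)=w-i$ force $supp(y)\subseteq supp(x)$, so $I^{i,w}(f^{i,i,n})(x)$ is the sum of $(-1)^{|M\cap T|}$ over all transversals $T\subseteq supp(x)$. If $supp(x)$ misses some pair this sum is empty; otherwise a transversal inside $supp(x)$ is obtained by choosing one admissible element from each pair, whence
$$I^{i,w}(f^{i,i,n})(x)=\prod_{j=1}^{i}\Big(\,\sum_{e\in\{m_j,m_j'\}\cap supp(x)}(-1)^{|\{e\}\cap M|}\Big),$$
and the $j$-th factor equals $0$ when both $m_j,m_j'\in supp(x)$ and $\pm1$ when exactly one of them is. Thus the product is nonzero precisely when $x$ meets every pair in exactly one element, and there it equals $(-1)^{|M\cap supp(x)|}$; that is exactly $f^{i,w,n}$. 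Since $i\le w$, part~1 of Theorem~\ref{i_f} now yields that $f^{i,w,n}=I^{i,w}(f^{i,i,n})$ is a $\big((w-i)(n-i-w)-i\big)$-eigenfunction of $J(n,w)$, and it is not the zero vector. Finally the support is counted at once: for each of the $i$ pairs one chooses which element lies in $supp(x)$ ($2^i$ possibilities) and then distributes the remaining $w-i$ ones among the $n-2i$ coordinates outside $M\cup M'$ ($\binom{n-2i}{w-i}$ possibilities), for a total of $2^i\binom{n-2i}{w-i}$.

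Everything here is elementary; the only step requiring real attention is the cancellation argument of the second paragraph, where one must enumerate the weight-$(i-1)$ vectors $z$ without omission and pin down the two-term cancellation. An alternative route, at the price of a noticeably longer case analysis, would bypass Theorem~\ref{i_f} and verify the eigenfunction identity $\lambda f^{i,w,n}(x)=\sum_{y\sim x}f^{i,w,n}(y)$ directly at every vertex $x$, the fiddly part being the vertices with $f^{i,w,n}(x)=0$.
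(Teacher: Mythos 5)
Your proposal is correct and follows essentially the same route as the paper: show $I^{i,i-1}(f^{i,i,n})\equiv 0$ to get that $f^{i,i,n}$ is a $(-i)$-eigenfunction of $J(n,i)$ via part~2 of Theorem~\ref{i_f}, identify $f^{i,w,n}=I^{i,w}(f^{i,i,n})$, and conclude with part~1 plus the straightforward support count. Your product factorization over the pairs $\{m_j,m_j'\}$ is just a tidier packaging of the paper's case analysis for the identification step, not a different argument.
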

 \begin{proof}

The proof relies on Theorem \ref{i_f}. We show that $f^{i,i,n}$ is
a $(-i)$-eigenfunction of the Johnson graph $J(n,i)$ and that
 $f^{i,w,n}=I^{i,w}(f^{i,i,n})$, where the functions $f^{i,w,n}$ and
 $f^{i,i,n}$ are obtained from the same pair of sets $M$ and $M'$ of sizes $i$.

 Let $x$ be a binary vector of length $n$ and weight
$i-1$. Consider the values of $I^{i,i-1}(f^{i,i,n})$ that could be
expressed as follows:
$$I^{i,i-1}(f^{i,i,n})(x)=\sum_{y: wt(y)=i,supp(x)\subset supp(y)}f^{i,i,n}(y).$$

 We have several cases.  If
 $|(M\cap supp(x))'\cup (M\cap supp(x))|=i-1$ then
there are just two elements $m\in M$ and $m'\in M'$ neither of
which belongs to $supp(x)$. Therefore $I^{i,i-1}(f^{i,i,n})(x)$ is
zero, since exactly two summands, $f^{i,i,n}(y)$ for $y$ such that
$supp(y)=supp(x)\cup \{m\}$ or $supp(y)=supp(x)\cup \{m'\}$ are
equal to $-1$ and $1$ and the other summands are zeros. In the
remaining cases
 every summand $f^{i,i,n}(y)$ is zero, so
$I^{i,i-1}(f^{i,i,n})$ is the all-zero function and by Theorem
\ref{i_f} the function $f^{i,i,n}$ is $(-i)$-eigenfunction of
$J(n,i)$.

We proceed with analogous considerations with the function
$I^{i,w}(f^{i,i,n})$:
$$I^{i,w}(f^{i,i,n})(x)=\sum_{y: wt(y)=i, supp(y)\subseteq
supp(x)}f^{i,i,n}(y).$$

 Again, we have several cases. If $supp(x)\cap (M\cup M')=supp(y)$ for some $y$ such that $f^{i,i,n}(y)\neq
 0$, then the remaining elements of the sum are zeros and we have $I^{i,w}(f^{i,i,n})(x)=f^{i,w,n}(x)$. If there is no $y$ such that $f^{i,i,n}(y)\neq 0$ and $supp(y)\subseteq supp(x)$ then by definition of $f^{i,i,n}$ we have that
$I^{i,w}(f^{i,i,n})(x)=0=f^{i,w,n}(x)$. The remaining case
 where there are several $y$'s such that $supp(y)\varsubsetneq supp(x)\cap (M\cup
 M')$ implies that there are the same number of $-1$'s and $1$'s in
 the sum by definition of $f^{i,i,n}$. Therefore in this case we have that $I^{i,w}(f^{i,i,n})(x)=f^{i,w,n}(x)=0$.

Finally, by Theorem \ref{i_f} we see that
$f^{i,w,n}=I^{i,w}(f^{i,i,n})$ is a
$((w-i)(n-w-i)-i)$-eigenfunction of $J(n,i)$ with the support of
size $2^i(^{n-2i}_{w-i})$.
 \end{proof}

\begin{theorem}\cite{Delsarte}
The eigenvalues of $J(n,w)$ are numbers
$\lambda_i(n,w)=(w-i)(n-w-i)-i, i\in \{0,\ldots,w\}$ with
multiplicities $(^{n}_{i})-(^{n}_{i-1})$.
\end{theorem}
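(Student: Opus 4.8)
The plan is to induct on $w$, transporting eigenspaces between $J(n,w-1)$ and $J(n,w)$ via the induced maps of Theorem~\ref{i_f}. Throughout we assume $2w\le n$ (otherwise use $J(n,w)\cong J(n,n-w)$).

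I first record that the $w+1$ numbers $\lambda_i(n,w)=(w-i)(n-w-i)-i$, $i=0,\dots,w$, are pairwise distinct: since $\lambda_i(n,w)=w(n-w)+i^2-(n+1)i$ and the parabola $i\mapsto i^2-(n+1)i$ has its vertex at $i=(n+1)/2>w$, the map $i\mapsto\lambda_i(n,w)$ is strictly decreasing on $\{0,1,\dots,w\}$. Hence, once each $\lambda_i(n,w)$ is known to be an eigenvalue, the corresponding eigenspaces $E_{\lambda_i}$ of the adjacency operator $A$ of $J(n,w)$ are pairwise distinct, so it makes sense to speak of their dimensions and to add them up over the $\lambda_i$.

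Now assume the theorem holds for $J(n,w-1)$. The map $I^{w-1,w}$ sends a function $g$ on $(w-1)$-subsets to $x\mapsto\sum_{y\subset x,\,|y|=w-1}g(y)$ on $w$-subsets, i.e.\ it is the transpose of the $(w-1)$-versus-$w$ set-inclusion matrix; by the classical rank formula for inclusion matrices (Gottlieb) that matrix has full rank $\binom{n}{w-1}$ whenever $2w-1\le n$, so $I^{w-1,w}$ is injective. By part~1 of Theorem~\ref{i_f} this map carries the $\lambda$-eigenspace of $J(n,w-1)$ into the $(\lambda+n-2w+1)$-eigenspace of $J(n,w)$, and a direct computation gives $\lambda_i(n,w-1)+(n-2w+1)=\lambda_i(n,w)$, so the inductive hypothesis yields $\dim E_{\lambda_i(n,w)}\ge\binom{n}{i}-\binom{n}{i-1}$ for $i=0,\dots,w-1$. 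For $i=w$ we have $\lambda_w(n,w)=-w$, and part~2 of Theorem~\ref{i_f} identifies the $(-w)$-eigenspace with the kernel of the map $I^{w,w-1}$ from $w$-subsets to $(w-1)$-subsets, whose image has dimension at most $\binom{n}{w-1}$, so $\dim E_{-w}\ge\binom{n}{w}-\binom{n}{w-1}$. Adding these $w+1$ lower bounds gives $\sum_{i=0}^{w}\dim E_{\lambda_i(n,w)}\ge\sum_{i=0}^{w}\bigl(\binom{n}{i}-\binom{n}{i-1}\bigr)=\binom{n}{w}$, which is the dimension of the whole vertex space. As $A$ is symmetric, its eigenspaces are mutually orthogonal and span that space, so every inequality above is an equality, there are no eigenvalues other than $\lambda_0(n,w),\dots,\lambda_w(n,w)$, and the multiplicity of $\lambda_i(n,w)$ is exactly $\binom{n}{i}-\binom{n}{i-1}$. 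The base case $w=0$ is a single vertex with eigenvalue $0$ of multiplicity $1$.

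The main difficulty is organizational rather than deep: one must line up three ingredients — the identity $\lambda_i(n,w)-\lambda_i(n,w-1)=n-2w+1$ matching the eigenvalue shift in Theorem~\ref{i_f}.1, the full-rank (hence injectivity) statement for the inclusion map, and the kernel description in Theorem~\ref{i_f}.2 — so that the telescoping sum closes with no slack. A conceptually cleaner alternative uses the $S_n$-module structure of the vertex space: it decomposes as $\bigoplus_{j=0}^{w}S^{(n-j,j)}$ with $\dim S^{(n-j,j)}=\binom{n}{j}-\binom{n}{j-1}$, each summand is an $A$-eigenspace because $A$ commutes with the $S_n$-action, and the induced maps pin the eigenvalue on $S^{(n-j,j)}$ down to $\lambda_j(n,w)$; there the real content is the classical decomposition of the permutation module on $w$-subsets.
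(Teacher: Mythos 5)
Your argument is correct, and it fills in precisely the proof that the paper only gestures at: the statement is quoted from Delsarte, whose original derivation goes through the machinery of the Johnson association scheme (equivalently, the $S_n$-module decomposition you mention as an alternative), and the paper merely remarks after the theorem that an alternative proof ``could be done'' by induction on $w$ using Theorem~\ref{i_f} and Proposition~\ref{min_trade_e_f}. Your induction is exactly that sketch made rigorous, with one ingredient the paper's hint does not supply and which you correctly identify as necessary: injectivity of the up-map $I^{w-1,w}$, which you import from Gottlieb's full-rank theorem for set-inclusion matrices (valid here since $2w-1\le n$). Note that Proposition~\ref{min_trade_e_f} by itself would only show that each $\lambda_i(n,w)$ occurs as an eigenvalue, not pin down the multiplicities, so some such rank statement (or the symmetric-group decomposition) genuinely has to enter; your telescoping count $\sum_{i=0}^{w}\bigl(\binom{n}{i}-\binom{n}{i-1}\bigr)=\binom{n}{w}$ then closes the argument with no slack, exactly as it should. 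The eigenvalue bookkeeping is right: $\lambda_i(n,w-1)+(n-2w+1)=\lambda_i(n,w)$ matches the shift $(w-i)(n-i-w)$ of Theorem~\ref{i_f} with $i=w-1$, and Theorem~\ref{i_f}(2) gives $\dim E_{-w}\ge\binom{n}{w}-\binom{n}{w-1}$ by rank--nullity. One small caveat worth keeping explicit: the statement as printed is really for $w\le n/2$ (otherwise the claimed multiplicities go negative and the distinctness of the $\lambda_i$ fails), which your reduction $J(n,w)\cong J(n,n-w)$ acknowledges and which is harmless in the paper's asymptotic setting $n\ge n(i,w)$.
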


Note that an alternative proof for the previous theorem could be
done with the help of Theorem \ref{i_f} and Proposition
\ref{min_trade_e_f} by induction on $w$.

The minimum support problem for eigenfunctions of a%!
Johnson graph with the minimum eigenvalues is equivalent to the
problem of minimum size of Steiner bitrades of strength $i-1$ with
blocks of size $i$, see theorem below.

\begin{theorem}\label{T3} \cite{liebzim}, \cite{hwang}, \cite{krotov}
The support of $\lambda_i(n,i)$-eigenfunction of $J(n,i)$ is at
least $2^i$ and  any function that attains the bound is
$f^{i,i,n}$ up to multiplication by a scalar. %^
\end{theorem}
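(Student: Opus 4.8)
Since here $w=i$, the eigenvalue in question is $\lambda_i(n,i)=(i-i)(n-2i)-i=-i$, the least eigenvalue of $J(n,i)$; by part~2 of Theorem~\ref{i_f}, $f$ is a $(-i)$-eigenfunction of $J(n,i)$ exactly when $\sum_{y:\,wt(y)=i,\,y\supset z}f(y)=0$ for every $(i-1)$-subset $z$ of the coordinate set, i.e.\ when $supp(f)$, split by the sign of $f$, is a Steiner bitrade of strength $i-1$ with blocks of size $i$. Proposition~\ref{min_trade_e_f} already exhibits $f^{i,i,n}$ with support $2^i$, so the plan is to prove the lower bound $|supp(f)|\ge 2^i$ and that equality forces $f=c\cdot f^{i,i,n}$ for a scalar $c$ and a suitable choice of the defining pairs $M,M'$.

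For the bound I would induct on $i$. The base $i=1$ is immediate: $J(n,1)\cong K_n$, and a $(-1)$-eigenfunction is a nonzero function whose values sum to $0$, hence has at least two nonzeros, with equality precisely for the functions supported on two coordinates with opposite values, which are the $f^{1,1,n}$. For the inductive step, pick a coordinate $p$ lying in $supp(y)$ for some but not all $y\in supp(f)$; such $p$ exists unless $|supp(f)|=1$, which no eigenfunction satisfies. Restricting the defining equations to $(i-1)$-sets through $p$ shows that the \emph{derived} function $g$, defined on the $(i-1)$-subsets $z$ of the remaining coordinates by $g(z)=f(z\cup\{p\})$, is a nonzero $(-(i-1))$-eigenfunction of $J(n-1,i-1)$, so $|supp(g)|\ge 2^{i-1}$ by induction. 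Restricting instead to $(i-2)$-sets avoiding $p$, and using that a null $(i-1)$-design also balances $(i-2)$-sets, shows that the \emph{residual} function $h$ --- the restriction of $f$ to the $i$-subsets avoiding $p$ --- is a nonzero null $(i-2)$-design with blocks of size $i$. I would establish, by the same derived/residual induction, the auxiliary fact that every nonzero null $s$-design with blocks of size $k\ge s+1$ has at least $2^{s+1}$ nonzeros (here deriving lowers $(s,k)$ to $(s-1,k-1)$ and taking the residual lowers it to $(s-1,k)$, with base case $s=0$); applied with $s=i-2$, $k=i$ it gives $|supp(h)|\ge 2^{i-1}$. Since $supp(f)$ is the disjoint union of the vectors through $p$ (in bijection with $supp(g)$) and those avoiding $p$ (equal to $supp(h)$), this yields $|supp(f)|\ge 2^{i-1}+2^{i-1}=2^i$.

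For the characterization I would track the equality case through this induction: $|supp(f)|=2^i$ forces $|supp(g)|=|supp(h)|=2^{i-1}$, hence $g$ is, up to a scalar, the transversal eigenfunction $f^{i-1,i-1,n-1}$ attached to some system of $i-1$ disjoint pairs, and $h$ is, up to a scalar, the analogous minimal null $(i-2)$-design, supported on the transversals of some $i-1$ disjoint pairs together with one fixed coordinate. The hard part will be the \emph{gluing}: one must show that, as $p$ ranges over all admissible coordinates, the pair systems produced this way are all restrictions of a single system of $i$ disjoint pairs, after which $f=c\cdot f^{i,i,n}$ follows; this is exactly where the hypothesis $n\ge n(i,w)$ (enough free coordinates) is used. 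A second delicate point is the base case of the auxiliary characterization --- strength $s=1$, since $s=0$ is useless as minimal null $0$-designs are not rigid --- together with the small cases $i\le 2$ of the theorem itself, all of which must be settled directly. This is precisely the route taken in the classical proofs of the Steiner-bitrade bound in \cite{liebzim}, \cite{hwang}, \cite{krotov}, to which the reader may also be referred.
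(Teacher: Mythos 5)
First, note that the paper itself gives no proof of Theorem~\ref{T3}: it is imported verbatim from \cite{liebzim}, \cite{hwang}, \cite{krotov}, so your attempt can only be measured against the classical arguments, not against anything in the text. Your lower-bound half is essentially that classical argument and is sound: the eigenvalue $-i$ condition is, by Theorem~\ref{i_f}(2), exactly the null-design (bitrade) condition of strength $i-1$ with blocks of size $i$; choosing a point $p$ lying in some but not all support blocks, the derived function $g$ and the residual $h$ are nonzero null designs of strength $i-2$ (for $h$ one uses, as you say, that a null $(i-1)$-design with block size $i>i-1$ is also a null $(i-2)$-design), and $|supp(f)|=|supp(g)|+|supp(h)|\ge 2^{i-1}+2^{i-1}=2^{i}$ follows by induction on the strength with the trivial base $s=0$. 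That part I would accept as a correct outline.

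The genuine gap is in the uniqueness statement, which is precisely the part of Theorem~\ref{T3} that the paper actually uses (in case~4 of the proof of Theorem~\ref{main} it must know that $h$ restricted to $J(2w,i)$ \emph{is} $f^{i,i,2w}$ up to scalar and coordinate permutation). Your proposal reduces equality to (a) a rigidity statement for minimal null $(i-2)$-designs with blocks of size $i$ (the residual side), which you assert but do not prove, and (b) the ``gluing'' of the pair systems arising from different admissible points $p$ into one system of $i$ disjoint pairs, which you explicitly leave open and ultimately hand back to \cite{liebzim}, \cite{hwang}, \cite{krotov}; as a self-contained proof this is incomplete exactly where the theorem has content. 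Moreover, your claim that the gluing ``is exactly where the hypothesis $n\ge n(i,w)$ is used'' is mistaken: Theorem~\ref{T3} carries no asymptotic hypothesis and cannot afford one, since the paper applies it inside $J(2w,i)$ with $2w$ fixed relative to $i$; indeed the foundation of a volume-minimum trade of strength $i-1$ with blocks of size $i$ is automatically a set of exactly $2i$ points, so no abundance of free coordinates is needed. If you want a complete proof you must carry out the equality analysis (e.g.\ by tracking, in the induction, that all nonzero values of $f$ have equal absolute value and that the supports through and avoiding $p$ are obtained from one another by the transposition of a single pair), or simply cite the references as the paper does.
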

The main result of the current paper is that the function
$f^{i,w,n}$ is the minimum $((w-i)(n-w-i)-i)$-eigenfunction of
$J(n,w)$ asymptotically.

\subsection{Reduction lemma }

Here we describe  a way to relate eigenspaces of different Johnson
graphs, which can be useful in providing inductive arguments. A
similar idea %^
 was suggested in \cite{val} for studying minimum
support eigenfunctions of q-ary Hamming graphs.

Let $f$ be a real-valued $\lambda_i(n,w)$-eigenfunction of
$J(n,w)$ for some $i\in \{0,1,\dots,w\}$ and $j_1,j_2\in
\{1,2,\dots,n\}$, $j_1<j_2$. Define a real-valued function
$f_{j_1,j_2}$ as follows: for any vertex $y=(y_1,y_2, \dots
,y_{j_1-1},y_{j_1+1}, \dots ,y_{j_2-1},y_{j_2+1},\dots,y_n)$ of
$J(n-2,w-1)$
\begin{flushright}$f_{j_1,j_2}(y)=f(y_1,y_2, \dots
,y_{j_1-1},1,y_{j_1+1}, \dots ,y_{j_2-1},0,y_{j_2+1},\dots,y_n)\,$
\end{flushright} \begin{flushright}
 $-f(y_1,y_2, \dots ,y_{j_1-1},0,y_{j_1+1}, \dots ,y_{j_2-1},1,y_{j_2+1}, \dots ,y_n).$
\end{flushright}

\begin{lemma}\label{L1}
 If f is $\lambda_i(n,w)$-eigenfunction of
$J(n,w)$ then $f_{j_1,j_2}$ is a
$\lambda_{i-1}(n-2,w-1)$-eigenfunction of $J(n-2,w-1)$.
\end{lemma}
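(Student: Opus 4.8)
The plan is to verify the eigenfunction equation directly by expanding the defining sum of $J(n-2,w-1)$ applied to $f_{j_1,j_2}$ and rearranging it into a combination of instances of the eigenfunction equation for $f$ on $J(n,w)$. Fix a vertex $y$ of $J(n-2,w-1)$, and write $y^{(1)}$ and $y^{(0)}$ for the two weight-$w$ vertices of $J(n,w)$ obtained from $y$ by inserting a $1$ in position $j_1$ and a $0$ in position $j_2$, respectively the $0$ in $j_1$ and the $1$ in $j_2$; thus $f_{j_1,j_2}(y)=f(y^{(1)})-f(y^{(0)})$. I would split the neighbours of $y$ in $J(n-2,w-1)$ into two types: those obtained by moving a one within the $n-2$ coordinates outside $\{j_1,j_2\}$ (call these the "far" neighbours), and note there is no move involving $j_1$ or $j_2$ since those coordinates are absent from $J(n-2,w-1)$. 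For each far neighbour $z$ of $y$, the vertices $z^{(1)}$ and $z^{(0)}$ are neighbours of $y^{(1)}$ and $y^{(0)}$ respectively in $J(n,w)$.

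First I would write down $\sum_{z\sim y} f_{j_1,j_2}(z) = \sum_{z\sim y}\bigl(f(z^{(1)})-f(z^{(0)})\bigr)$. The key point is to compare $\sum_{z\sim y} f(z^{(1)})$ with $\sum_{u\sim y^{(1)}} f(u)$, where the latter sum ranges over all neighbours of $y^{(1)}$ in $J(n,w)$. The neighbours of $y^{(1)}$ split into: (a) the $z^{(1)}$ for $z$ a far neighbour of $y$ (moving a one among the outside coordinates); (b) the vertex obtained by moving the one out of position $j_1$ into position $j_2$, which is exactly $y^{(0)}$; and (c) the vertices obtained by moving the one out of position $j_1$ into one of the outside zero-coordinates, together with the vertices obtained by moving one of the outside ones into position $j_2$. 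So $\lambda_i(n,w) f(y^{(1)}) = \sum_{z\sim y} f(z^{(1)}) + f(y^{(0)}) + (\text{type (c) terms for }y^{(1)})$, and symmetrically $\lambda_i(n,w) f(y^{(0)}) = \sum_{z\sim y} f(z^{(0)}) + f(y^{(1)}) + (\text{type (c) terms for }y^{(0)})$. Subtracting these two identities, the far sums combine to give $\sum_{z\sim y} f_{j_1,j_2}(z)$, the cross terms $f(y^{(0)})$ and $f(y^{(1)})$ contribute $f(y^{(0)}) - f(y^{(1)}) = -f_{j_1,j_2}(y)$, so I get $\lambda_i(n,w) f_{j_1,j_2}(y) = \sum_{z\sim y} f_{j_1,j_2}(z) - f_{j_1,j_2}(y) + \Delta(y)$, where $\Delta(y)$ is the difference of the type (c) terms.

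The crux is therefore to show that the type (c) contributions cancel appropriately. The type (c) neighbours of $y^{(1)}$ are: for each outside zero-coordinate $k$, the vertex with a one moved from $j_1$ to $k$; and for each outside one-coordinate $\ell$, the vertex with a one moved from $\ell$ to $j_2$. The type (c) neighbours of $y^{(0)}$ are obtained by the same moves but with the roles of $j_1$ (now zero) and $j_2$ (now one) swapped: for each outside zero-coordinate $k$, move the one from $j_2$ to $k$; for each outside one-coordinate $\ell$, move one from $\ell$ to $j_1$. Observe that the map swapping the contents of $j_1$ and $j_2$ is a bijection between the type (c) neighbours of $y^{(1)}$ and those of $y^{(0)}$. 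Since $f$ need not be symmetric under that swap this alone does not close the argument; instead I would group the type (c) terms differently. The cleanest route is to recognise that $\Delta(y)$ itself is, up to sign and reindexing, a sum over outside coordinates of values of $f_{j_1,j_2}$-type differences at weight-$w$ vertices — no: more precisely, one checks that $\Delta(y) = \sum_k \bigl(f(\dots) - f(\dots)\bigr)$ telescopes to exactly $\bigl(\lambda_i(n,w) - \lambda_{i-1}(n-2,w-1) + 1\bigr) f_{j_1,j_2}(y)$, which is the identity we need since $\lambda_{i-1}(n-2,w-1) = (w-i)(n-w-i) - (i-1) = \lambda_i(n,w) + 1$. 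So the whole computation reduces to verifying this one numerical relation between the eigenvalues together with bookkeeping of which neighbour is which.

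I expect the main obstacle to be precisely this bookkeeping: carefully partitioning the neighbourhoods of $y$, $y^{(1)}$, and $y^{(0)}$ and checking that every weight-$w$ vertex within Johnson-distance one of $y^{(1)}$ or $y^{(0)}$ is accounted for exactly once, with the correct sign, in the rearranged sum. The degenerate configurations — when $y$ has a one or a zero in a coordinate that interacts with $j_1,j_2$ — need to be handled, but since $j_1,j_2$ are not coordinates of $J(n-2,w-1)$ at all, the only interaction is through the insertion maps, and one just has to be systematic. Once the neighbourhood decomposition is pinned down, the proof is the eigenvalue arithmetic $\lambda_{i-1}(n-2,w-1) = \lambda_i(n,w)+1$ plus a sign check, both routine. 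An alternative, perhaps slicker, approach would be to express $f_{j_1,j_2}$ as a composition of the difference operator with a restriction and invoke intertwining relations in the Terwilliger algebra of the Johnson scheme, but the direct neighbour-counting argument above is self-contained and uses only what is already in the excerpt.
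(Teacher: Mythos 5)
Your setup is the right one, and in fact it is the same as the paper's: write $f_{j_1,j_2}(y)=f(y^{(1)})-f(y^{(0)})$, expand the eigenfunction equation of $f$ at $y^{(1)}$ and at $y^{(0)}$, subtract, and use $\lambda_{i-1}(n-2,w-1)=\lambda_i(n,w)+1$ (your arithmetic here is correct). The gap is exactly at the crux, the vanishing of the leftover term $\Delta(y)$. You first note that swapping the entries in positions $j_1,j_2$ gives a bijection between the type (c) neighbours of $y^{(1)}$ and those of $y^{(0)}$, then discard this because ``$f$ need not be symmetric under that swap'', and finally assert, with no computation, that $\Delta(y)$ ``telescopes to exactly $(\lambda_i(n,w)-\lambda_{i-1}(n-2,w-1)+1)f_{j_1,j_2}(y)$, which is the identity we need''. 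That sentence is circular: it simply restates what has to be proved, and you acknowledge in the last paragraph that this bookkeeping is the unresolved obstacle. As written, the cancellation is not established.

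The missing observation is one line and is the heart of the paper's proof: the type (c) neighbours of $y^{(1)}$ and of $y^{(0)}$ are not merely in bijection, they are the \emph{same} vertices. Moving the one out of $j_1$ (from $y^{(1)}$) or out of $j_2$ (from $y^{(0)}$) into an outside zero coordinate $k$ yields in both cases the vertex with zeros at $j_1$ and $j_2$ and outside support $supp(y)\cup\{k\}$; moving an outside one $\ell$ into $j_2$ (from $y^{(1)}$) or into $j_1$ (from $y^{(0)}$) yields in both cases the vertex with ones at $j_1$ and $j_2$ and outside support $supp(y)\setminus\{\ell\}$. Every such vertex has equal entries at $j_1$ and $j_2$, so your swap bijection is actually the identity map on these sets; no symmetry of $f$ is needed, the two type (c) sums coincide term by term, and $\Delta(y)=0$. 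This is precisely the paper's remark that $(0,1,y)$ and $(1,0,y)$ have common neighbours inside $\{(0,0,z):wt(z)=w\}$ and $\{(1,1,z):wt(z)=w-2\}$. With that line inserted, your subtraction gives $(\lambda_i(n,w)+1)f_{j_1,j_2}(y)=\sum_{z\sim y}f_{j_1,j_2}(z)$, and your eigenvalue identity finishes the proof exactly as in the paper.
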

\begin{proof}
Without loss of generality we have $j_1=1, j_2=2$. The statement
follows from the fact that vertices $(0,1,y_3,\ldots,y_n)=(0,1,y)$
and $(1,0,y)$ have common neighbors in the subgraphs of $J(n,w)$
induced by sets of vertices $\{(0,0,z): wt(z)=w\}$ and $\{(1,1,z):
wt(z)=w-2\}$. More precisely, we have the following equalities:

$$\lambda_i(n,w)f(0,1,y)=f(1,0,y)+\sum_{z: wt(z)=w-1,
d(z,y)=1}f(0,1,z)+$$
$$\sum_{z: wt(z)=w,
d(z,y)=1}f(0,0,z)+\sum_{z: wt(z)=w-2, d(z,y)=1}f(1,1,z),$$
$$\lambda_i(n,w)f(0,1,y)=f(1,0,y)+\sum_{z: wt(z)=w-1,
d(z,y)=1}f(1,0,z)+$$ $$\sum_{z: wt(z)=w,
d(z,y)=1}f(0,0,z)+\sum_{z: wt(z)=w-2, d(z,y)=1}f(1,1,z),$$

therefore we have that %^
$$(\lambda_i(n,w)-1)(f(0,1,y)-f(1,0,y))=(\lambda_i(n,w)-1)f_{1,2}(y)=\sum_{z: d(z,y)=1}f_{1,2}(z)$$

In other words, $f_{1,2}$ is $(\lambda_i(n,w)-1)$-eigenfunction
which taking into account that
$\lambda_i(n,w)-1=\lambda_{i-1}(n-2,w-1)$ finishes the proof.

\end{proof}

As we see, given an eigenfunction $f$ from the reduction Lemma
\ref{L1} we obtain the eigenfunctions $f_{j_1,j_2}$ in a Johnson
graph with smaller parameters for every distinct coordinates %^
$j_1,j_2$. In some cases the resulting function $f_{j_1,j_2}$ is
just all-zero function, for example when $f=f^{i,w,n}$ from
Proposition \ref{min_trade_e_f} with $n\ge 2w+2$ and
$j_1,j_2\not\in M\cup M'$.

\begin{lemma}\label{L2}
Let $f$ be a real-valued function of $J(n,w)$, and $j_1,j_2,j_3\in \{1,2,\dots,n\}$, $j_1<j_2<j_3$. If $f_{j_1,j_2}\equiv 0$ and $f_{j_1,j_3}\equiv 0$ then $f_{j_2,j_3}\equiv 0$.
\end{lemma}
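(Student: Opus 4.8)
The plan is to recognise that the vanishing of $f_{a,b}$ is nothing but a symmetry of $f$. Writing $\tau_{ab}$ for the transposition of the two coordinates $a$ and $b$, acting on the vertex set of $J(n,w)$ in the obvious way, I would first prove the equivalence: $f_{a,b}\equiv 0$ if and only if $f(\tau_{ab}x)=f(x)$ for every vertex $x$ of $J(n,w)$. This is immediate from the definition: $f_{a,b}(y)$ is by construction equal to $f(x)-f(\tau_{ab}x)$, where $x$ is the unique weight-$w$ vector with $x_a=1$, $x_b=0$ restricting to $y$ on the remaining $n-2$ coordinates; hence $f_{a,b}\equiv 0$ says precisely that $f$ is unchanged under swapping coordinates $a$ and $b$ on all vectors with $x_a\neq x_b$, and on vectors with $x_a=x_b$ this swap does nothing.

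Given this reformulation, the lemma reduces to a one-line group-theoretic observation. The set $\{\sigma\in S_n : f(\sigma x)=f(x)\ \text{for all}\ x\}$ is a subgroup of the symmetric group $S_n$, and by hypothesis it contains $\tau_{j_1 j_2}$ and $\tau_{j_1 j_3}$. Since $\tau_{j_2 j_3}=\tau_{j_1 j_2}\,\tau_{j_1 j_3}\,\tau_{j_1 j_2}$, it also contains $\tau_{j_2 j_3}$, which, by the equivalence above, is exactly the assertion $f_{j_2,j_3}\equiv 0$. If one prefers to avoid even this much group theory, the same fact can be checked by hand: for a vertex $x$ with $x_{j_2}=1$, $x_{j_3}=0$, split into the cases $x_{j_1}=0$ and $x_{j_1}=1$, and in each case two successive applications of the two hypotheses (in one order or the other) transform $x$ into $\tau_{j_2 j_3}x$ through intermediate vectors on which the relevant pair of coordinates is unequal, yielding $f(x)=f(\tau_{j_2 j_3}x)$.

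I do not anticipate any serious obstacle. The only point demanding a little care is the bookkeeping in the equivalence above: one must check that the asymmetric roles of $a$ and $b$ in the definition of $f_{a,b}$ (one coordinate is fixed to $1$, the other to $0$) are irrelevant, precisely because a transposition acts trivially on vectors that are constant on the two transposed coordinates. Once that is settled, the rest is purely formal and holds for an arbitrary real-valued $f$, with no use of the eigenfunction property.
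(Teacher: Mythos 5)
Your proof is correct, and the lemma indeed holds for arbitrary real-valued $f$ with no eigenfunction hypothesis, exactly as you note. Your primary route differs from the paper's in packaging: the paper fixes $z\in J(n-2,w-1)$ and writes out the four equalities supplied by $f_{j_1,j_2}\equiv 0$ and $f_{j_1,j_3}\equiv 0$, then combines them in two cases according to the value of the coordinate $j_1$ --- which is precisely the ``by hand'' fallback you sketch at the end. Your main argument instead reformulates $f_{a,b}\equiv 0$ as invariance of $f$ under the coordinate transposition $\tau_{ab}$ (the bookkeeping point you flag --- that the asymmetry of $1$ and $0$ in the definition of $f_{a,b}$ is harmless because $\tau_{ab}$ fixes vectors with equal entries in positions $a,b$ --- is exactly right and is the only thing to check), and then invokes the stabilizer subgroup of $f$ in $S_n$ together with $\tau_{j_2j_3}=\tau_{j_1j_2}\tau_{j_1j_3}\tau_{j_1j_2}$. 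What the group-theoretic framing buys is the absence of case analysis and a statement that scales: invariance under a set of transpositions gives invariance under the whole subgroup they generate, which also makes transparent the later step in the paper's main proof where $f_{j_1,j_2}\equiv 0$ for all $j_1,j_2\in N_2$ is used to conclude that $f(x)$ depends only on the number of ones of $x$ in $N_2$. What the paper's direct computation buys is that it is completely elementary and self-contained, at the cost of being an explicit four-equality check.
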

\begin{proof}
Without loss of generality we can take $j_1=1$,$j_2=2$ and $j_3=3$. Let us fix $z=(z_1,z_2,\dots,z_{n-2})\in J(n-2,w-1)$.
In these terms our goal is to prove that $$f(z_1,1,0,z_2,\dots,z_{n-2})=f(z_1,0,1,z_2,\dots,z_{n-2}).$$
Since $f_{1,2}\equiv 0$ and $f_{1,3}\equiv 0$, we have
$$f(0,1,1,z_2,\dots,z_{n-2})=f(1,0,1,z_2,\dots,z_{n-2}),$$
$$f(0,1,0,z_2,\dots,z_{n-2})=f(1,0,0,z_2,\dots,z_{n-2}),$$
$$f(0,1,1,z_2,\dots,z_{n-2})=f(1,1,0,z_2,\dots,z_{n-2}),$$
$$f(0,0,1,z_2,\dots,z_{n-2})=f(1,0,0,z_2,\dots,z_{n-2}).$$
Combining the first and the third equalities we obtain
$$f(1,1,0,z_2,\dots,z_{n-2})=f(1,0,1,z_2,\dots,z_{n-2}),$$
and combining the second and the fourth equalitues we find that
$$f(0,1,0,z_2,\dots,z_{n-2})=f(0,0,1,z_2,\dots,z_{n-2}),$$

so $f_{2,3}\equiv 0$.

\end{proof}
% The previous lemma implies the existence of an equivalence relation on the set of coordinates.
\begin{corollary}\label{C1}
Let $f$ be a real-valued function of $J(n,w)$. Then the set
$N=\{1,2,\dots,n\}$ of coordinates can be partitioned into $t(f)$
sets $$N={\bigsqcup_{j=1}^{t(f)}{N_j}}, |N_j|>0,$$ such that the
following properties hold: $$\mbox{ for any } j\in
\{1,2,\dots,t(f)\}\,\mbox{ and }
 j_1,j_2\in N_j \mbox{ we have that } \,f_{j_1,j_2}\equiv 0 \text{,}$$
$$\mbox{ if there are } j_1,j_2 \mbox{ such that }(f_{j_1,j_2}\equiv 0) \mbox{ then  there is } j\in \{1,2,\dots, t(f)\}:\,j_1,j_2\in N_j.$$
\end{corollary}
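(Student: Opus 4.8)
The plan is to realize the sets $N_j$ as the equivalence classes of a suitable equivalence relation on the coordinate set $N=\{1,\dots,n\}$, and then to read off both required properties directly from the notion of equivalence class. For an unordered pair $\{a,b\}$ of distinct coordinates write $f_{\{a,b\}}$ for $f_{\min(a,b),\,\max(a,b)}$, so that the condition ``$f_{\{a,b\}}\equiv 0$'' does not depend on the order in which the pair is listed.

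First I would define a relation $\approx$ on $N$: put $a\approx a$ for every $a$, and for $a\neq b$ declare $a\approx b$ exactly when $f_{\{a,b\}}\equiv 0$. Reflexivity is then immediate and symmetry holds by construction, so the whole content of the argument lies in verifying transitivity.

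The second step is to observe that Lemma \ref{L2} is precisely this transitivity statement in disguise. The property ``$f_{\{j_1,j_2\}}\equiv 0$'' is invariant under permuting the coordinates of $J(n,w)$ (such a permutation is a graph automorphism, and being identically zero is preserved under relabelling arguments), so Lemma \ref{L2} in fact applies to every triple of pairwise distinct coordinates, not only to an increasing one. In this symmetric form it asserts that for distinct $a,b,c$ one cannot have exactly two of $f_{\{a,b\}},f_{\{a,c\}},f_{\{b,c\}}$ identically zero, i.e. any two of them being identically zero forces the third. Now suppose $a\approx b$ and $b\approx c$. If two of $a,b,c$ coincide then $a\approx c$ follows from reflexivity and symmetry; if $a,b,c$ are pairwise distinct then $f_{\{a,b\}}\equiv 0$ and $f_{\{b,c\}}\equiv 0$, so the symmetric form of Lemma \ref{L2}, applied with $b$ as the shared index, gives $f_{\{a,c\}}\equiv 0$, that is $a\approx c$. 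Hence $\approx$ is an equivalence relation.

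Finally I would take $N_1,\dots,N_{t(f)}$ to be the equivalence classes of $\approx$. They are nonempty and partition $N$ by the standard properties of equivalence classes, so $|N_j|>0$ and $N=\bigsqcup_{j=1}^{t(f)}N_j$. If $j_1,j_2\in N_j$ are distinct then $j_1\approx j_2$, i.e. $f_{j_1,j_2}\equiv 0$, which is the first asserted property; conversely, $f_{j_1,j_2}\equiv 0$ gives $j_1\approx j_2$, so $j_1$ and $j_2$ lie in one and the same class $N_j$, which is the second asserted property. The only point of the proof that needs genuine care is the middle step, namely confirming that Lemma \ref{L2} --- stated only for an increasing triple $j_1<j_2<j_3$ and with its conclusion pinned to the pair avoiding the smallest index --- really does upgrade, via coordinate permutations, to the fully symmetric ``two out of three'' statement, and in particular covers the case in which the shared index $b$ of the hypotheses $a\approx b$, $b\approx c$ is not the least element of $\{a,b,c\}$. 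Everything else is routine bookkeeping about equivalence relations.
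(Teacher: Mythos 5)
Your proof is correct and matches the route the paper intends: Corollary~\ref{C1} is stated without proof precisely because it follows from Lemma~\ref{L2} by taking the classes of the relation ``$f_{j_1,j_2}\equiv 0$ or $j_1=j_2$'', which is exactly your equivalence-class construction, and your relabelling argument legitimately upgrades Lemma~\ref{L2} (stated for $j_1<j_2<j_3$) to the symmetric two-out-of-three form needed for transitivity, since $f_{j_1,j_2}\equiv 0$ just says $f$ is invariant under transposing coordinates $j_1$ and $j_2$. No gaps; this is essentially the paper's (implicit) argument made explicit.
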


\section{Main result}

\begin{theorem}\label{main}

Let $i,w$ be positive integers, $w\ge i$. There is $n_0(i,w)$ such
that for all $n\geq n_0(i,w)$
 and any nonzero $\lambda_i(n,w)$-eigenfunction $f$ of
$J(n,w)$ the following holds:
$$|supp(f)|\ge 2^i {n-2i \choose w-i},$$ with equality attained only for
the function $f^{i,w,n}$ from Proposition \ref{min_trade_e_f} up
to multiplication by a scalar.%^
\end{theorem}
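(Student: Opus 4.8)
The plan is to argue by induction on $i$, using the reduction Lemma~\ref{L1} as the main engine. For the base case $i=1$ one shows directly that a $\lambda_1(n,w)$-eigenfunction has at least $2\binom{n-2}{w-1}$ nonzeros; this can be extracted from the structure of the eigenspace of $J(n,w)$ (the first nontrivial Delsarte module), or reduced to the known minimum-support results for small eigenvalues. For the inductive step, suppose the bound and the characterization hold for $\lambda_{i-1}$-eigenfunctions of $J(n-2,w-1)$ whenever $n-2$ is large enough, and let $f$ be a $\lambda_i(n,w)$-eigenfunction of $J(n,w)$. By Lemma~\ref{L1}, for every pair $j_1<j_2$ the reduced function $f_{j_1,j_2}$ is a $\lambda_{i-1}(n-2,w-1)$-eigenfunction of $J(n-2,w-1)$, hence is either identically zero or has at least $2^{i-1}\binom{n-2-2(i-1)}{w-1-(i-1)}=2^{i-1}\binom{n-2i}{w-i}$ nonzeros.

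The combinatorial heart of the argument is Corollary~\ref{C1}: the coordinate set $\{1,\dots,n\}$ partitions into blocks $N_1,\dots,N_{t(f)}$ so that $f_{j_1,j_2}\equiv 0$ exactly when $j_1,j_2$ lie in a common block. First I would rule out $t(f)=1$: if \emph{every} reduction vanished, then $f$ would be ``constant along all symmetric swaps,'' forcing $f$ to be essentially a function of the weight only, which contradicts $f$ being a nonzero eigenfunction for a nontrivial eigenvalue (for $i\ge 1$). So $t(f)\ge 2$, and there is at least one pair $j_1,j_2$ in different blocks with $f_{j_1,j_2}\not\equiv 0$. I would then bound $|supp(f)|$ from below in terms of the supports of the nonzero reductions: a nonzero value of $f_{j_1,j_2}(y)$ forces $f$ to be nonzero on at least one of the two parent vertices $(1_{j_1},0_{j_2},y)$, $(0_{j_1},1_{j_2},y)$, so $|supp(f_{j_1,j_2})|\le$ the number of vertices in $supp(f)$ that have exactly one of the coordinates $j_1,j_2$ equal to $1$. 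Summing such inequalities over a well-chosen family of pairs (one pair per cross-block adjacency, organized so that each vertex of $supp(f)$ is counted a bounded number of times) yields $|supp(f)|\ge 2^i\binom{n-2i}{w-i}$ once $n$ is large, since the ``overlap'' losses are of lower order in $n$ than $\binom{n-2i}{w-i}$.

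For the equality case, suppose $|supp(f)|=2^i\binom{n-2i}{w-i}$. Then all the inequalities above are tight: every nonzero reduction $f_{j_1,j_2}$ must itself be a minimum-support eigenfunction of $J(n-2,w-1)$, hence by the induction hypothesis equals $\pm f^{i-1,w-1,n-2}$ for some pair of ``swap sets'' of size $i-1$; moreover each parent vertex above is uniquely determined and $t(f)$ and the block sizes are pinned down. Tracking how the swap sets of the various $f_{j_1,j_2}$ must fit together — they have to be compatible across overlapping pairs — forces the existence of two disjoint $i$-sets $M,M'$ of coordinates with a matching, and then a direct check on each vertex recovers $f=\pm f^{i,w,n}$. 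I expect the main obstacle to be precisely this bookkeeping in the equality analysis: organizing the cross-block pair inequalities so that they are simultaneously tight, and then gluing the inductively-obtained $f^{i-1,w-1,n-2}$ structures on different reductions into a single consistent $(M,M')$ for $f$; getting the threshold $n_0(i,w)$ explicit enough that all lower-order terms are genuinely dominated is the routine-but-delicate part.
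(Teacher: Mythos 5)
Your overall frame (induction on $i$, Lemma~\ref{L1}, the block partition of Corollary~\ref{C1}) matches the paper's, but the step that actually produces the bound is missing. From the induction hypothesis a single nonzero reduction $f_{j_1,j_2}$ only yields $2^{i-1}\binom{n-2i}{w-i}$ vertices of $supp(f)$ (one parent per nonzero of $f_{j_1,j_2}$), i.e.\ exactly half of what you need; the entire difficulty is gaining the missing factor $2$. Your plan to ``sum over a well-chosen family of cross-block pairs, each vertex counted a bounded number of times, with overlap losses of lower order in $n$'' has no mechanism behind it and is false as stated: already for the extremal function $f^{i,w,n}$ the parent sets of the different nonzero reductions (e.g.\ the pairs $(m,m')$ with $m\in M$) each cover \emph{all} of $supp(f)$, so the overlaps are of full order, not lower order. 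For a hypothetical small-support counterexample you have no control at all over how the parent sets of different pairs intersect, so the summation cannot be organized as claimed. The paper does something genuinely different here: a double count of edges on which $f$ takes distinct values gives only the inequality $2w(n-w)\ge T(n-T)$ for the largest block size $T$, hence a dichotomy $T\le 2w$ or $T\ge n-2w$; the first branch is killed by a second counting, and in the main branch $T\ge n-2w$ the function collapses to a function $h$ on $H(2w)$ (since $f$ depends only on the number of ones in the big block), which is then analyzed layer by layer via its minimal nonzero weight $j$, using Theorem~\ref{T3} (the $2^i$ Steiner-bitrade bound and its uniqueness) to supply the factor $2^i$ and further applications of Lemma~\ref{L1} to exclude $j>i$. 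Your proposal never invokes Theorem~\ref{T3} or any substitute for it, and without such an input the factor $2^i$ does not appear.

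The equality characterization in your sketch is likewise only a plan: ``tracking how the swap sets of the various $f_{j_1,j_2}$ fit together'' is precisely the bookkeeping the paper spends most of its effort on, and it is done there not by gluing the inductive $f^{i-1,w-1,n-2}$ structures but through the same $h$-analysis (case $j=i$ with exactly $2^i$ nonzeros, Theorem~\ref{T3} pinning $h$ to $f^{i,i,2w}$ on $J(2w,i)$, then showing $f_{j_1,j_2}\equiv 0$ for all $j_1,j_2>2i$ via Lemmas~\ref{L1} and~\ref{L2}, so that $f$ depends only on the first $2i$ coordinates and equals $f^{i,w,n}$). Finally, your base case $i=1$ is asserted rather than proved; the paper starts the induction at the trivial case $i=0$, which you could do as well, but as written the proposal has unproved steps at both the base case and, more seriously, at the core counting and the equality analysis.
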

\begin{proof}
The proof is based on the induction on $i$. For $i=0$ the
statement is obviously true. We suppose that the statement of the
theorem is true for all $i'<i$
% in other words $\forall i'<i \, \forall w\ge i' \exists n_0(i',w): \forall n\ge n_0\, \forall \lambda_{i'}(n,w)-function \, f\, of\, J(n,w)$
 and we are to prove it for $i$, arbitrary $w\ge i$ and $n$ big enough.

Suppose that the opposite is true, i.e. for some $i$ and $w$ there
is a non-zero $\lambda_i(n,w)$-eigenfunction with the support of
size less then $2^i {n-2i \choose w-i}$. According to Corollary
\ref{C1} the set $N=\{1,2,\dots,n\}$ of $n$ coordinates can be
partitioned. Without loss of generality we may assume that there
are exactly $n$ parts $S_1,S_2,\dots,S_n$ of sizes $t_1,t_2,
\dots, t_n$ correspondingly with $t_j\ge 0$ that partition $N$:
$N=\cup_{i\in \{1,\ldots,n\}}S_i$, %^
 such that $f_{j_1, j_2}\equiv
0$ iff $j_1$ and $j_2$ are in one part. Let us take
$$T=\max_{j=1,2,\dots,n}{t_j},$$ so $T$ is the size of the largest
part, which may be not unique. Again without loss of generality we
can consider $|S_1|=T$. Corollary \ref{C1} yields
\begin{equation}\label{ineq1} |\{(j_1,j_2):j_1>j_2,f_{j_1,j_2}\not\equiv 0\}|=\sum_{1\le k<l\le n}{t_kt_l}\ge T(n-T),\end{equation} because every pair of coordinates from different parts gives us a non-zero function.

Denote by $X$ the set of pairs of adjacent vertices of $J(n,w)$
where $f$ has distinct values:

$$\{(x_1,x_2): wt(x_1)=wt(x_2)=w, |supp(x_1)\cap supp(x_2)|=w-1, f(x_1)\neq f(x_2) \}.$$

A pair of adjacent vertices $x_1$, $x_2$ is uniquely characterized
by the pair of elements $j_1=supp(x_1)\setminus supp(x_2)$ and
$j_2=supp(x_2)\setminus supp(x_1)$. Therefore the size of $X$ is
not less then $$|\{(j_1,j_2):j_1>j_2,f_{j_1,j_2}\not\equiv
0\}||supp(f^{i-1,w,n-2})|.$$

On the other hand the size of $X$ obviously does not exceed
$w(n-w)|supp(f)|$, so

\begin{equation}\label{ineq2} w(n-w)|supp(f)|\ge
|\{(j_1,j_2):j_1>j_2,f_{j_1,j_2}\not\equiv
0\}||supp(f^{i-1,w,n-2})|.
\end{equation}
 Since $|supp(f)|\le 2^i {n-2i \choose w-i}$ and by inductive hypothesis for $n$ big enough the inequality $|supp(f^{i-1,w,n-2})|\ge 2^{i-1} {(n-2)-2(i-1) \choose (w-1)-(i-1)}$ holds, we finally
 obtain
$$2w(n-w)\ge T(n-T).$$
For our following arguments we suppose that \begin{equation}\label{ineq_n}n>2w^2+4w+1,\end{equation} and it gives us $T\le 2w$ or $T\ge n-2w$. \\
Let us start with the first case: $T\le 2w$. Returning to
(\ref{ineq1}) we have $$|\{(j_1,j_2):j_1>j_2,f_{j_1,j_2}\not\equiv
0\}|=\sum_{1\le k<l\le
n}{t_kt_l}=\frac{1}{2}((\sum_{k=1}^{n}{t_k})^2-\sum_{k=1}^{n}{t_k^2}).$$
Since $n=\sum_{j=1}^{n}{t_j}$, we obtain
\begin{equation}\label{ineq3}
|\{(j_1,j_2):j_1>j_2,f_{j_1,j_2}\not\equiv
0\}|=\frac{1}{2}(n^2-\sum_{k=1}^{n}{t_k^2}) .
\end{equation}
Providing the same argument as in proving (\ref{ineq2}) we have
that
$$|\{(j_1,j_2):j_1>j_2,f_{j_1,j_2}\not\equiv
0\}|\le 2w(n-w). $$ %^
 As we know, $t_k$'s are real non-negative
numbers, which are not greater than $2w$, therefore we have
$$\sum_{k=1}^{n}{t_k^2}\le \frac{n}{2w}(2w)^2.$$
Combining two previous inequalities and (\ref{ineq3}) we finally get $2wn\ge n^2-4wn+4w^2$, which is not true for $n>(3+\sqrt{5})w$ and consequently for $n>2w^2+4w+1$ too.\\

Now we consider the case  when $T\ge n-2w$. Let us divide the set
of coordinates $N=\{1,2,\dots,n\}$ into two non-intersecting parts
$N_1$ and $N_2$, such that $N_2\subseteq S_1$ and $|N_2|=n-2w$.
Without loss of generality we can take $N_1=\{1,2,\ldots,2w\}$ and
$N_2=\{2w+1,2w+2,\ldots, n\}$. The fact that for any $j_1,j_2\in
N_2$ we have that $f_{j_1,j_2}\equiv 0$ guarantees us that $f(x)$
does not depend on the distribution of ones of vector $x$ in
$N_2$, only on their number in $N_2$. In other words, if $x_1,
x_2\in J(n,w)$ and $supp(x_1)\cap N_2=supp(x_2)\cap N_2$ then
$f(x_1)=f(x_2)$. Based on this property let us define a function
$h:H(2w) \to \mathbb{R}$ as follows: $$h(z)=\begin{cases}
f(z,\underbrace{1,\dots,1}_{w-wt(z)},\underbrace{0,\dots,0}_{n-3w+wt(z)}), wt(z)\le w\\
0,&\text{otherwise.}
\end{cases} $$
The function $f$ is a $\lambda_i(n,w)$-eigenfunction of $J(n,w)$,
therefore for every $x\in J(n,w)$
\begin{equation}\label{eigenJ}
\lambda_i(n,w) f(x)=\sum_{y\in J(n,w):|supp(x)\cap supp(y)|=w-1} f(y).
\end{equation}
Take $x=(z,\underbrace{1,\dots,1}_{w-j},\underbrace{0,\dots,0}_{n-3w+j})$ for arbitrary $z\in H(2w),\,wt(z)=j\le w$ and rewrite (\ref{eigenJ}) in terms of values of $h$:

\begin{equation}\label{eigenH}
 \begin{array}{r@{\,}r@{\,}l@{\,}l}
 \lambda_i(n,w) h(z) =  &\sum\limits_{wt(z')=j, |supp(z)\cap supp(z')|=j-1} &h(z')&+\\
         &\sum\limits_{wt(z')=j-1,|supp(z)\cap supp(z')|=j-1} &(n-3w+j)h(z')&+ \\
         &\sum\limits_{wt(z')=j+1,|supp(z)\cap supp(z')|=j} &(w-j)h(z')&+\\
         &&(w-j)(n-3w+j)h(z).

   \end{array}
\end{equation}

In the final part of the proof we are focused on properties of the
function $h$. The function $f$ is such that $f\not\equiv 0$, so
$h\not\equiv 0$. Let $j$ be a minimal integer, such that there is
$z \in H(2w): wt(z)=j$ and $h(z)\neq 0$. There are four different
cases:
\begin{enumerate}
\item $j=0$. We have that $h(z)=f(\underbrace{0,\dots,0}_{2w},\underbrace{1,\dots,1}_{w},\underbrace{0,\dots,0}_{n-3w})\neq 0$. By our previous arguments we can permute zeros and ones in $N_2$ without changing the value of $f$. Then we have at least ${n-2w \choose w}$ non-zero values of $f$. For $n$ big enough it is greater than $2^i{n-2i \choose w-i}$ and this leads us to a contradiction.
\item $0<j<i$. Let us take any $z_0\in H(2w):wt(z_0)=j-1$ and rewrite (\ref{eigenH}) for $z=z_0$:
$$0=(w-j)\sum_{wt(z')=j,|supp(z_0)\cap supp(z')|=j} h(z').$$
Since $j<i\le w$ and $z_0$ is arbitrary, this equation implies
that $h$ is a $(-j)$-eigenfunction of $J(2w,j)$ by Theorem
\ref{i_f} (second item). Therefore by Theorem \ref{T3} there are at least
$2^j$ vectors $z\in J(2w,j)$, such that $h(z)\neq 0$. As we did in
case $j=0$ we can permute the values of coordinates of $z$ in
$N_2$ without changing the value of $f$. We conclude that there
are at least $2^j{n-2w \choose w-j}$ non-zeros of function $f$ and
for $n$ big enough this value is greater than $2^i{n-2i \choose
w-i}$, which is a contradiction.
\item $j>i$. In this case we have $z\in H(2w)$, $wt(z)=j$. Without loss of generality we can take $$z=(\underbrace{1,\dots,1}_{j},\underbrace{0,\dots,0}_{2w-j})$$ and $$\hat{z}=(\underbrace{1,\dots,1}_{j},\underbrace{0,\dots,0}_{2w-j},\underbrace{1,\dots,1}_{w-j},\underbrace{0,\dots,0}_{n-3w+j}).$$ Consider a function $f_{1,3w-j+1}$. By Lemma \ref{L1} this function is a $\lambda_{i-1}(n-2,w-1)$-eigenfunction of $J(n-2,w-1)$. After deleting two coordinates from $N$ we obtain the set $\{2,3,\dots,3w-j,3w-j+2,3w-j+3,\dots,n\}$. Then we repeat this procedure $i$ times more and by $q$, $q:J(n-2i-2,w-i-1)\to \mathbb{R}$ we denote the function such that: $$q=(\dots((f_{1,3w-j+1})_{2,3w-j+2})\dots)_{i+1,3w-j+i+1}.$$ By Lemma \ref{L1} this function is a $\lambda_{-1}(n-2i-2,w-i-1)$-eigenfunction of $J(n-2i-2,w-i-1)$, in other words just a zero-function. On the other hand,  $q(\underbrace{1,\dots,1}_{j-i-1},\underbrace{0,\dots,0}_{2w-j},\underbrace{1,\dots,1}_{w-j},\underbrace{0,\dots,0}_{n-3w+j-i-1})$ equals a linear combination of values of $h$. All vectors except $z$ in this combination have weight less than $j$, so we conclude that  $q(\underbrace{1,\dots,1}_{j-i-1},\underbrace{0,\dots,0}_{2w-j},\underbrace{1,\dots,1}_{w-j},\underbrace{0,\dots,0}_{n-3w+j-i-1})=h(z)\neq 0$, which contradicts the fact that $q$ is all-zero function.
\item $j=i$. Providing the same arguments as in case $0<j<i$ we prove that there are at least $M\geq 2^i$ and $M{n-2w \choose w-i}\geq 2^i{n-2w \choose w-i}$ non-zero values of $h$ in $J(2w,i)$ and $f$ in $J(n,w)$ correspondingly.
The case $M>2^i$ leads us to a contradiction, because $$M{n-2w \choose w-i}>2^i{n-2i \choose
w-i} $$ for $n$ big enough.

What we are interested now is what one can say about $h$ in case
$M=2^i$. By Theorem \ref{T3} the function $h$ on vertices of
$J(2w,i)$ is $f^{i,i,2w}$ up to a permutation of the first $2w$
coordinates and a multiplication by a scalar, where $M\cup M'=\{1,2,\dots,2i\}$. Without loss of %!
generality after dividing by the scalar and applying the
permutation to $h$ we can consider that $h$ is equal to
$f^{i,i,2w}$ on vertices of $J(2w,i)$.  However, we still do not
know the values of $h$ in other vertices of $H(2w)$.

Let us take some $s_0\in \{2i+1,2i+2,\dots,2w\}$ and consider $f_{s_0,2w+1}$. Our following goal is to show that
$f_{s_0,2w+1}\equiv 0$.
Suppose that the opposite is true and take some $x\in J(n-2,w-1)$ with minimal $m=|supp(x)\cap (\{1,2,\dots 2w\}\setminus \{s_0\})|$ such that $f_{s_0,2w+1}(x)\neq 0$. By definition $f_{s_0,2w+1}(x)=f(x')-f(x'')$, where $x',x''\in J(n,w) $ are obtained from $x$ by adding two coordinates ($s_0$ and $2w+1$) with values $1$ and $0$ for $x'$ and $0$ and $1$ for $x''$ correspondingly. Particularly, it means that $|supp(x')\cap \{1,2,\dots 2w\}|=m+1$ and $|supp(x'')\cap \{1,2,\dots 2w\}|=m$. 

In case $m\le i-2$ vectors $x'$ and $x''$ have less than $i$ ones in the first $2w$ coordinates, so we have $f_{s_0,2w+1}(x)=0-0=0$, and we reach a contradiction. 

In case $m=i-1$ the vector $x'$ has exactly $i$ ones in the first $2w$ coordinates and the vector $x''$ has only $i-1$, what gives us $f_{s_0,2w+1}(x)=f(x')-0=f(x')$. However, as we know $f^{i,i,2w}$ has nonzero values only on some vectors with ones on the first $2i$ coordinates. By definition of $s_0$ we have $s_0\in supp(x')$ and $s_0\not \in \{1,2,\dots,2i\}$, so we conclude that $f_{s_0,2w+1}(x)=0$ and reach a contradiction.  

Consequently, one can claim that $m\ge i$.

Let $supp(x)\cap (\{1,2,\dots 2w\}\setminus \{s_0\})=
\{s_1,s_2,\dots,s_m\}$ and define $\hat{x}$ as the vector obtained
from $x$ by deleting %^
 coordinates $\{s_1,s_2,\dots,s_i\}$. %^

Consider the function
$$g=(\dots((f_{s_0,2w+1})_{s_1,2w+2})\dots)_{s_i,2w+i+1}.$$
Similar to the case $j>i$ this function is a
$\lambda_{-1}(n-2i-2,w-i-1)$-eigenfunction of $J(n-2i-2,w-i-1)$ by
Lemma \ref{L1}, in other words just the all-zero function. %^
 On
the other hand, $g(\hat{x})$ equals a linear combination of values
of $f_{s_0,2w+1}$. It is clear, that only one of them is the value
of $f_{s_0,2w+1}$ on the vector with $m$ ones in the first $2w$
positions (vector $x$), and other have less number of ones there.
Therefore we conclude that $g(\hat{x})=f_{s_0,2w+1}(x)\neq 0$ and
find %^
 a contradiction.

Since $s_0$ was an arbitrary element of $\{2i+1,2i+2,\dots,2w\}$
we conclude that $\forall j_1,j_2 \in \{2i+1,2i+2,\dots ,n\}$ the
equality $f_{j_1,j_2}\equiv 0$ holds by Lemma \ref{L2}, i.e.
$f(x)$ depends only on the distribution of ones of $x$ in the first $2i$
positions. The knowledge of values of $h$ in $J(2i,i)$ gives us
that there are at least $2^i{n-2i \choose w-i}$ non-zero values of
$f$ in $J(n,w)$. So we conclude that $h$ is a zero-function
outside $J(2i,i)$ and see that $f=f^{i,w,n}$.%^

\end{enumerate}
\end{proof}

In the proof of the previous theorem we had to take $n$ big enough
several times independently, so finding a good lower bound on
$n_0(i,w)$ is a problem. Even in the case $i=1$ the answer is still unknown.

\section{Acknowledgements}
The authors are grateful to Sergey Goryainov for providing initial examples of minimum eigenfunctions using computer.

\end{document}